\documentclass{amsart}

\usepackage{latexsym,amsfonts} 
\usepackage{amsmath,amssymb,graphics,setspace}
\usepackage{amsthm}
\usepackage{MnSymbol}
\usepackage{mathrsfs} 
\usepackage{fontenc}%

\usepackage{marvosym}
\usepackage{paralist}
\usepackage{color}

\usepackage{pstricks,pst-text,pst-grad,pst-node,pst-3dplot,pstricks-add,pst-poly} 

\usepackage{graphicx}
\usepackage{fontenc}%

\usepackage{hyperref}  
\hypersetup{linktocpage=true,colorlinks=true,linkcolor=blue,citecolor=blue,pdfstartview={XYZ 1000 1000 1}}

\usepackage[verbose]{wrapfig}

\usepackage{subfigure}
\renewcommand{\thesubfigure}{\thefigure.\arabic{subfigure}}
\makeatletter
\renewcommand{\p@subfigure}{}
\renewcommand{\@thesubfigure}{\thesubfigure:\hskip\subfiglabelskip}
\makeatother

\newtheorem{theorem}{Theorem}
\newtheorem{lemma}{Lemma}

\newtheorem{proposition}{Proposition}

\theoremstyle{definition}

\newtheorem{example}{Example}

\newcommand{\norm}[1]{\left\|#1\right\|}
\newcommand{\cl}{\mbox{cl}} 
\newcommand{\cv}{\mbox{conv}} 
\newcommand{\near}{\delta} 
\newcommand{\dnear}{\delta_{\Phi}} 
\newcommand{\dfar}{{\not\delta}_{\Phi}} 
\newcommand{\notfar}{\mathop{\not{\delta}}\limits^{\doublewedge}} 
\newcommand{\dcap}{\mathop{\cap}\limits_{\Phi}} 
\newcommand{\sn}{\mathop{\delta}\limits^{\doublewedge}} 
\newcommand{\snd}{\mathop{\delta_{_{\Phi}}}\limits^{\doublewedge}} 
\newcommand{\Cn}{\mbox{\large$\mathfrak{C}$}} 
\newcommand{\Cmn}{\mbox{max}\mathfrak{C}} 
\newcommand{\Cdn}{\mathfrak{C}_{\Phi}}
\newcommand{\Cmdn}{\mbox{max}\mathfrak{C}_{\Phi}}
\newcommand{\Int}{\mbox{int}}
\newcommand{\sdfar}{\stackrel{\not{\text{\normalsize$\delta$}}}{\text{\tiny$\doublevee$}}_{_{\Phi}}} 

\newcommand{\sfar}{\stackrel{\not{\text{\normalsize$\delta$}}}{\text{\tiny$\doublevee$}}} 

\begin{document}

\title{Strongly Proximal Edelsbrunner-Harer Nerves in Vorono\"{i} Tessellations}

\author[J.F. Peters]{J.F. Peters$^{\alpha}$}
\email{James.Peters3@umanitoba.ca, einan@adiyaman.edu.tr}
\address{\llap{$^{\alpha}$\,}Computational Intelligence Laboratory,
University of Manitoba, WPG, MB, R3T 5V6, Canada and
Department of Mathematics, Faculty of Arts and Sciences, Ad\.{i}yaman University, 02040 Ad\.{i}yaman, Turkey}
\author[E. Inan]{E. \.{I}nan$^{\beta}$}
\address{\llap{$^{\beta}$\,} Department of Mathematics, Faculty of Arts and Sciences, Ad\i yaman University, 02040 Ad\i yaman, Turkey and Computational Intelligence Laboratory,
University of Manitoba, WPG, MB, R3T 5V6, Canada}
\thanks{The research has been supported by the Scientific and Technological Research
Council of Turkey (T\"{U}B\.{I}TAK) Scientific Human Resources Development
(BIDEB) under grant no: 2221-1059B211402463 and the Natural Sciences \&
Engineering Research Council of Canada (NSERC) discovery grant 185986.}

\subjclass[2010]{Primary 54E05 (Proximity structures); Secondary 57Q10 (Simple homotopy type), 52A01 (Axiomatic and generalized convexity)}

\date{}

\dedicatory{Dedicated to the Memory of Som Naimpally}

\begin{abstract}
This paper introduces Edelsbrunner-Harer nerve in collections of Vorono\"{i} regions (called nucleus clusters) endowed with one or more proximity relations.  The main results in this paper are that a maximal nucleus cluster (MNC) in a Vorono\"{i} Tessellation is a strongly proximal Edelsbrunner-Harer nerve, each MNC nerve and the union of the sets in the MNC have the same homotopy type.
\end{abstract}

\keywords{Homotopy Type, Nerve, Nucleus Clustering, Strong Proximity, Vorono\"{i} Tessellation}

\maketitle

\section{Introduction}

This paper introduces a variation of Edelsbrunner-Harer nerves which are collections of Vorono\"{i} regions (called nucleus clusters) endowed with one or more proximity relations. Harer-Edelsbrunner nerves are introduced in~\cite[\S III.2, p. 59]{Edelsbrunner2010compTop}.

\setlength{\intextsep}{0pt}
\begin{wrapfigure}[8]{R}{0.35\textwidth}
\begin{minipage}{5.2 cm}
\centering
\includegraphics[width=20mm]{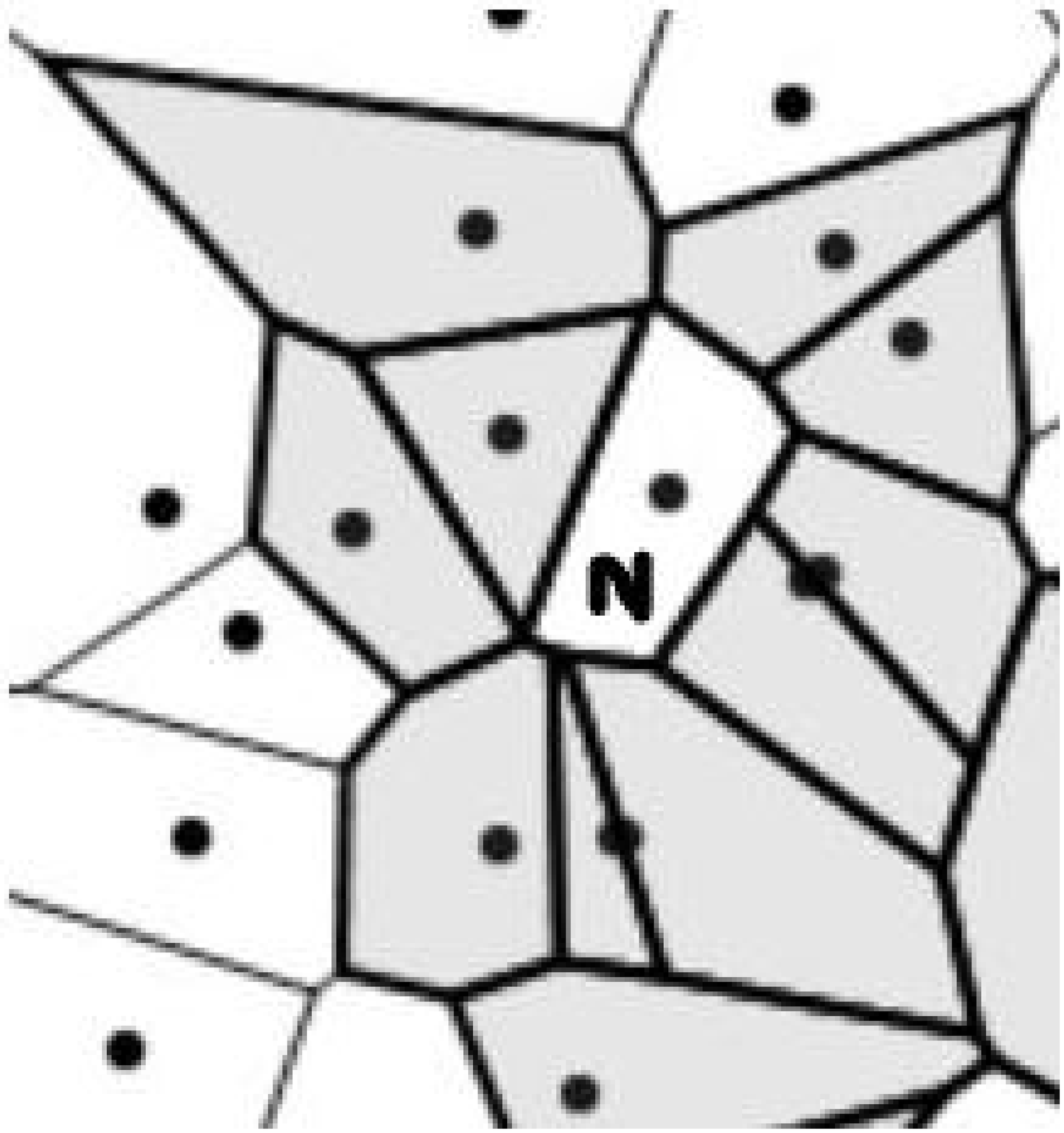}
\caption[]{$\mbox{}$\\ Nucleus Cluster}
\label{fig:nucleusCluster}
\end{minipage}
\end{wrapfigure}

Vorono\"{i} tessellation has great utility and has many applications such as the creation of synthetic poly-crystals, computer graphics~\cite{Fukushige2007VoronoiTessellationPolygonVisibility}, geodesy~\cite{Gold2009VoronoiTessellationGeodesy}, non-parametric sampling~\cite{Villagran2016VoronoiTessellationSampling} and geometric modelling in physics, astrophysics, chemistry and biology~\cite{Qiang2010VoronoiTessellationApplications}.  The form of clustering introduced in this article has proved to be important in the analysis of brain tissue~\cite{Peters2016brainTissueTessellation}, cortical activity and brain symmetries~\cite{Tozzi2016JNeuroSciSymmetries,Duyckaerts2000neuronTessellation} and capillary loss in skeletal and cardiac muscle~\cite{AlShammari2012VoronoiTessellationCapillaryLoss}.  Vorono\"{i} nucleus clustering also has great utility in the study of digital images (see,{\em e.g.},~\cite[\S 1.13]{Peters2016ComputationalProximity},~\cite{Aiyeh2015TAMCSquality},~\cite{Wang2011VoronoiTessellationSegmentation}).  The focus of this paper is not on the applications of MNCs, recently proved to be of great utility~\cite{Tozzi2016JNeuroSciSymmetries,Peters2016brainTissueTessellation}.  Instead, the focus is on maximal nucleus clusters (MNCs) in proximity spaces and MNCs that are strongly proximal Edelsbrunner-Harer nerves.  A proximity space setting for MNCs makes it possible to investigate the strong closeness of subsets in MNCs as well as the spatial and descriptive closeness of MNCs themselves.

\section{Preliminaries}
This section introduces the axioms for traditional as well as strong proximity spaces.   Strong proximities were introduced in~\cite{Peters2015AMSJmanifolds}, elaborated in~\cite{Peters2016ComputationalProximity} (see, also,~\cite{Inan2015}) and are a direct result of earlier work on proximities~\cite{DiConcilio2006,DiConcilio2013mcs,Naimpally1970,Naimpally2009,Naimpally2013}.  

\subsection{Spatial and Descriptive Lodato Proximity}
This section briefly introduces spatial and descriptive forms of proximity that provide a basis for two corresponding forms of strong Lodato proximity introduced in~\cite{Peters2015AMSJmanifolds} and axiomatized in~\cite{Peters2016ComputationalProximity}.  

Let $X$ be a nonempty set. A \emph{Lodato proximity}~\cite{Lodato1962,Lodato1964,Lodato1966} $\delta$ is a relation on the family of sets $2^X$, which satisfies the following axioms for all subsets $A, B, C $ of $X$:\\

\begin{description}
\item[{\rm\bf (P0)}] $\emptyset \not\delta A, \forall A \subset X $.
\item[{\rm\bf (P1)}] $A\ \delta\ B \Leftrightarrow B \delta A$.
\item[{\rm\bf (P2)}] $A\ \cap\ B \neq \emptyset \Rightarrow A \near B$.
\item[{\rm\bf (P3)}] $A\ \delta\ (B \cup C) \Leftrightarrow A\ \delta\ B $ or $A\ \delta\ C$.
\item[{\rm\bf (P4)}] $A\ \delta\ B$ and $\{b\}\ \delta\ C$ for each $b \in B \ \Rightarrow A\ \delta\ C$. \qquad \textcolor{blue}{$\blacksquare$}
\end{description}
\vspace{3mm}
Further $\delta$ is \textit{separated }, if 
\vspace{3mm}
\begin{description}
\item[{\rm\bf (P5)}] $\{x\}\ \delta\ \{y\} \Rightarrow x = y$. \qquad \textcolor{blue}{$\blacksquare$}
\end{description}

\noindent We can associate a topology with the space $(X, \delta)$ by considering as closed sets those sets that coincide with their own closure.  For a nonempty set $A\subset X$, the closure of $A$ (denoted by $\mbox{cl} A$) is defined by,
\[
\mbox{cl} A = \{ x \in X: x\ \delta\ A\}.
\]

The descriptive proximity $\delta_{\Phi}$ was introduced in~\cite{Peters2012ams}.   Let $A,B \subset X$ and let $\Phi(x)$ be a feature vector for $x\in X$, a nonempty set of non-abstract points such as picture points.  $A\ \delta_{\Phi}\ B$ reads $A$ is descriptively near $B$, provided $\Phi(x) = \Phi(y)$ for at least one pair of points, $x\in A, y\in B$.  From this, we obtain the description of a set and the descriptive intersection~\cite[\S 4.3, p. 84]{Naimpally2013} of $A$ and $B$ (denoted by $A\ \dcap\ B$) defined by
\begin{description}
\item[{\rm\bf ($\boldsymbol{\Phi}$)}] $\Phi(A) = \left\{\Phi(x)\in\mathbb{R}^n: x\in A\right\}$, set of feature vectors.
\item[{\rm\bf ($\boldsymbol{\dcap}$)}]  $A\ \dcap\ B = \left\{x\in A\cup B: \Phi(x)\in \Phi(A)\ \mbox{and}\ \Phi(x)\in \Phi(B)\right\}$.
\qquad \textcolor{blue}{$\blacksquare$}
\end{description}
Then swapping out $\near$ with $\dnear$ in each of the Lodato axioms defines a descriptive Lodato proximity. 

That is, a \textit{descriptive Lodato proximity $\dnear$} is a relation on the family of sets $2^X$, which satisfies the following axioms for all subsets $A, B, C $ of $X$.\\

\begin{description}
\item[{\rm\bf (dP0)}] $\emptyset\ \dfar\ A, \forall A \subset X $.
\item[{\rm\bf (dP1)}] $A\ \dnear\ B \Leftrightarrow B\ \dnear\ A$.
\item[{\rm\bf (dP2)}] $A\ \dcap\ B \neq \emptyset \Rightarrow\ A\ \dnear\ B$.
\item[{\rm\bf (dP3)}] $A\ \dnear\ (B \cup C) \Leftrightarrow A\ \dnear\ B $ or $A\ \dnear\ C$.
\item[{\rm\bf (dP4)}] $A\ \dnear\ B$ and $\{b\}\ \dnear\ C$ for each $b \in B \ \Rightarrow A\ \dnear\ C$. \qquad \textcolor{blue}{$\blacksquare$}
\end{description}
\vspace{3mm}
Further $\dnear$ is \textit{descriptively separated }, if 
\vspace{3mm}
\begin{description}
\item[{\rm\bf (dP5)}] $\{x\}\ \dnear\ \{y\} \Rightarrow \Phi(x) = \Phi(y)$ ($x$ and $y$ have matching descriptions). \qquad \textcolor{blue}{$\blacksquare$}
\end{description}
\vspace{3mm}

\noindent The pair $\left( X,\dnear \right)$ is called a \emph{descriptive proximity space}.   Unlike the Lodato Axiom (P2), the converse of the descriptive Lodato Axiom (dP2) also holds.

\begin{proposition}\label{prop:dnear}
Let $\left(X,\dnear\right)$ be a descriptive proximity space, $A,B\subset X$.  Then $A\ \dnear\ B \Rightarrow A\ \dcap\ B\neq \emptyset$.
\end{proposition}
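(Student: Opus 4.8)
The plan is to prove the implication by simply unwinding the two definitions involved, since the statement is essentially a reformulation of the fact that the converse of \textbf{(dP2)} holds. First I would invoke the definition of the descriptive nearness relation: $A\ \dnear\ B$ means precisely that there exist points $x\in A$ and $y\in B$ with $\Phi(x) = \Phi(y)$. So I would begin the proof by fixing such a witnessing pair $x\in A$, $y\in B$.

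Next I would verify that the common feature vector of $x$ and $y$ witnesses membership in the descriptive intersection $A\ \dcap\ B$. From the definition of the description $\Phi(A) = \{\Phi(a)\in\mathbb{R}^n : a\in A\}$ of a set, we have $\Phi(x)\in\Phi(A)$ directly, since $x\in A$. Since $\Phi(x) = \Phi(y)$ and $y\in B$, we also get $\Phi(x) = \Phi(y)\in\Phi(B)$. Finally $x\in A\subseteq A\cup B$. Hence $x$ satisfies all the defining conditions in the definition of $A\ \dcap\ B$, i.e. $x\in A\ \dcap\ B$, and therefore $A\ \dcap\ B\neq\emptyset$, which is the claim.

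I do not expect a genuine obstacle here; the whole content is making the quantifier ``for at least one pair of points'' in the definition of $\dnear$ explicit and then reading off the definition of $\dcap$. The one point worth flagging in the write-up is what is \emph{not} being asserted: we do not conclude that the ordinary (spatial) intersection $A\cap B$ is nonempty --- indeed that can fail --- only that the descriptive intersection is nonempty. This is exactly the asymmetry between \textbf{(P2)}, whose converse fails, and \textbf{(dP2)}, whose converse is Proposition~\ref{prop:dnear}.
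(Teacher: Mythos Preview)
Your proposal is correct and follows essentially the same approach as the paper's proof: both unwind the definition of $A\ \dnear\ B$ to obtain witnesses $x\in A$, $y\in B$ with $\Phi(x)=\Phi(y)$, and then conclude $A\ \dcap\ B\neq\emptyset$. Your version is simply more explicit in checking that $x$ satisfies the defining conditions of $A\ \dcap\ B$, whereas the paper leaves that verification implicit.
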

\begin{proof}
$A\ \dnear\ B \Leftrightarrow$ there is at least one $x\in A, y\in B$ such that $\Phi(x)=\Phi(y)$ (by definition of $A\ \dnear\ B$)\  Hence, $A\ \dcap\ B\neq \emptyset$.
\end{proof}


\subsection{Spatial and Descriptive Strong Proximities}
This section briefly introduces spatial strong proximity between nonempty sets and descriptive strong Lodato proximity.

Nonempty sets $A,B$ in a topological space $X$ equipped with the relation $\sn$, are \emph{strongly near} [\emph{strongly contacted}] (denoted $A\ \sn\ B$), provided the sets have at least one point in common.   The strong contact relation $\sn$ was introduced in~\cite{Peters2015JangjeonMSstrongProximity} and axiomatized in~\cite{PetersGuadagni2015stronglyNear},~\cite[\S 6 Appendix]{Guadagni2015thesis}.

Let $X$ be a topological space, $A, B, C \subset X$ and $x \in X$.  The relation $\sn$ on the family of subsets $2^X$ is a \emph{strong proximity}, provided it satisfies the following axioms.

\begin{description}
\item[{\rm\bf (snN0)}] $\emptyset\ \sfar\ A, \forall A \subset X $, and \ $X\ \sn\ A, \forall A \subset X$.
\item[{\rm\bf (snN1)}] $A \sn B \Leftrightarrow B \sn A$.
\item[{\rm\bf (snN2)}] $A\ \sn\ B$ implies $A\ \cap\ B\neq \emptyset$. 
\item[{\rm\bf (snN3)}] If $\{B_i\}_{i \in I}$ is an arbitrary family of subsets of $X$ and  $A \sn B_{i^*}$ for some $i^* \in I \ $ such that $\Int(B_{i^*})\neq \emptyset$, then $  \ A \sn (\bigcup_{i \in I} B_i)$ 
\item[{\rm\bf (snN4)}]  $\mbox{int}A\ \cap\ \mbox{int} B \neq \emptyset \Rightarrow A\ \sn\ B$.  
\qquad \textcolor{blue}{$\blacksquare$}
\end{description}

\noindent When we write $A\ \sn\ B$, we read $A$ is \emph{strongly near} $B$ ($A$ \emph{strongly contacts} $B$).  The notation $A\ \notfar\ B$ reads $A$ is not strongly near $B$ ($A$ does not \emph{strongly contact} $B$). For each \emph{strong proximity} (\emph{strong contact}), we assume the following relations:
\begin{description}
\item[{\rm\bf (snN5)}] $x \in \Int (A) \Rightarrow x\ \sn\ A$ 
\item[{\rm\bf (snN6)}] $\{x\}\ \sn \{y\}\ \Leftrightarrow x=y$  \qquad \textcolor{blue}{$\blacksquare$} 
\end{description}

For strong proximity of the nonempty intersection of interiors, we have that $A \sn B \Leftrightarrow \Int A \cap \Int B \neq \emptyset$ or either $A$ or $B$ is equal to $X$, provided $A$ and $B$ are not singletons; if $A = \{x\}$, then $x \in \Int(B)$, and if $B$ too is a singleton, then $x=y$. It turns out that if $A \subset X$ is an open set, then each point that belongs to $A$ is strongly near $A$.  The bottom line is that strongly near sets always share points, which is another way of saying that sets with strong contact have nonempty intersection.   Let $\near$ denote a traditional proximity relation~\cite{Naimpally1970}. \\

\begin{figure}[!ht]
\centering
\includegraphics[width=55mm]{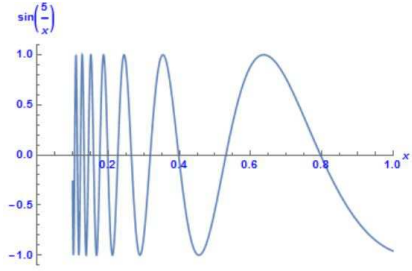}
\caption[]{Near Sets: $A = \left\{(x,0): 0.1\leq x\leq 1\right\},B = \left\{(x,sin(5/x)):0.1\leq x\leq 1\right\}$}
\label{fig:nearSetsStrong}
\end{figure}
$\mbox{}$\\
\vspace{2mm}

Next, consider a proximal form of a Sz\'{a}z relator~\cite{Szaz1987}.  A \emph{proximal relator} $\mathscr{R}$ is a set of relations on a nonempty set $X$~\cite{Peters2016relator}.  The pair $\left(X,\mathscr{R}\right)$ is a proximal relator space.  The connection between $\sn$ and $\near$ is summarized in Prop.~\ref{thm:sn-implies-near}.

\begin{proposition}\label{thm:sn-implies-near}
Let $\left(X,\left\{\near,\dnear,\sn\right\}\right)$ be a proximal relator space, $A,B\subset X$.  Then 
\begin{compactenum}[1$^o$]
\item $A\ \sn\ B \Rightarrow A\ \near\ B$.
\item $A\ \sn\ B \Rightarrow A\ \dnear\ B$.
\end{compactenum}
\end{proposition}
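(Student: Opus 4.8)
The claim to prove is Proposition~\ref{thm:sn-implies-near}: in a proximal relator space $(X,\{\near,\dnear,\sn\})$, strong nearness implies both ordinary Lodato nearness and descriptive nearness.

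\medskip

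The plan is to derive both implications directly from axiom (snN2), which tells us that $A\ \sn\ B$ forces $A\cap B\neq\emptyset$, together with the weak-intersection axioms (P2) for $\near$ and (dP2) for $\dnear$. First I would handle statement $1^o$: assume $A\ \sn\ B$. By (snN2) we immediately get $A\cap B\neq\emptyset$. Then Lodato axiom (P2) says $A\cap B\neq\emptyset \Rightarrow A\ \near\ B$, which gives the conclusion. So the first part is essentially a two-line chain: $A\ \sn\ B \Rightarrow A\cap B\neq\emptyset \Rightarrow A\ \near\ B$.

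\medskip

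For statement $2^o$, I would argue similarly but route through the descriptive machinery. Again $A\ \sn\ B$ gives $A\cap B\neq\emptyset$ by (snN2); pick a point $x\in A\cap B$. Trivially $\Phi(x)=\Phi(x)$ with $x\in A$ and $x\in B$, so $x$ witnesses $A\ \dcap\ B\neq\emptyset$ (equivalently, $A\cap B\subseteq A\ \dcap\ B$, so the descriptive intersection is nonempty whenever the ordinary one is). Now descriptive Lodato axiom (dP2) states $A\ \dcap\ B\neq\emptyset \Rightarrow A\ \dnear\ B$, giving the conclusion. Alternatively one could first invoke statement $1^o$ and note that $\near\subseteq\dnear$ in the relator, but the cleaner self-contained route is through (snN2) and (dP2) directly.

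\medskip

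There is essentially no obstacle here — the proposition is a bookkeeping consequence of the axiom hierarchy, the only subtlety being to make explicit that an ordinary common point is automatically a descriptive common point, so that nonempty intersection upgrades to nonempty descriptive intersection before (dP2) is applied.

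\begin{proof}
$1^o$: Suppose $A\ \sn\ B$. By axiom (snN2), $A\cap B\neq\emptyset$. Hence, by the Lodato axiom (P2), $A\ \near\ B$.

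\smallskip

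\noindent $2^o$: Suppose $A\ \sn\ B$. By axiom (snN2), $A\cap B\neq\emptyset$, so there is a point $x\in A\cap B$. Since $x\in A$, $x\in B$ and $\Phi(x)=\Phi(x)$, we have $x\in A\ \dcap\ B$, so that $A\ \dcap\ B\neq\emptyset$. Consequently, by the descriptive Lodato axiom (dP2), $A\ \dnear\ B$.
\end{proof}
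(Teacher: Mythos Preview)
Your proof is correct and follows essentially the same route as the paper's: both parts use (snN2) to get $A\cap B\neq\emptyset$, then invoke (P2) for $1^o$ and, for $2^o$, observe that a common point yields matching feature vectors so that $A\ \dcap\ B\neq\emptyset$ and (dP2) applies. Your phrasing with a single witness $x$ and $\Phi(x)=\Phi(x)$ is marginally cleaner than the paper's ``$x\in A,\ y\in B$ common to $A$ and $B$'', but the argument is the same.
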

\begin{proof}$\mbox{}$\\
1$^o$: From Axiom (snN2), $A\ \sn\ B$ implies $A\ \cap\ B\neq \emptyset$, which implies $A\ \near\ B$ (from Lodato Axiom (P2)).\\
2$^o$: From 1$^o$, there are $x\in A, y\in B$\ common to $A$ and $B$.  Hence, $\Phi(x) = \Phi(y)$, which implies $A\ \dcap\ B\neq \emptyset$.  Then, from the descriptive Lodato Axiom (dP2), $A\ \dcap\ B \neq \emptyset \Rightarrow\ A\ \dnear\ B$. This gives the desired result.
\end{proof}

\begin{example}
Let $X$ be a topological space endowed with the strong proximity $\sn$ and $A = \left\{(x,0): 0.1\leq x\leq 1\right\}$,$B = \left\{(x,sin(5/x)):0.1\leq x\leq 1\right\}$.  In this case, $A,B$ represented by
Fig.~\ref{fig:nearSetsStrong} are strongly near sets with many points in common.  
\qquad \textcolor{blue}{$\blacksquare$}
\end{example}

The descriptive strong proximity $\snd$ is the descriptive counterpart of $\sn$.
To obtain a \emph{descriptive strong Lodato proximity} (denoted by \emph{\bf dsn}), we swap out $\dnear$ in each of the descriptive Lodato axioms with the descriptive strong proximity $\snd$.  

Let $X$ be a topological space, $A, B, C \subset X$ and $x \in X$.  The relation $\snd$ on the family of subsets $2^X$ is a \emph{descriptive strong Lodato proximity}, provided it satisfies the following axioms.
\vspace{2mm}
\begin{description}
\item[{\rm\bf (dsnP0)}] $\emptyset\ {\sdfar}\ A, \forall A \subset X $, and \ $X\ \snd\ A, \forall A \subset X$.
\item[{\rm\bf (dsnP1)}] $A\ \snd\ B \Leftrightarrow B\ \snd\ A$.
\item[{\rm\bf (dsnP2)}] $A\ \snd\ B$ implies $A\ \dcap\ B\neq \emptyset$.  
\item[{\rm\bf (dsnP4)}] $\mbox{int}A\ \dcap\ \mbox{int} B \neq \emptyset \Rightarrow A\ \snd\ B$.  
\qquad \textcolor{blue}{$\blacksquare$}
\end{description}

\noindent When we write $A\ \snd\ B$, we read $A$ is \emph{descriptively strongly near} $B$.
For each \emph{descriptive strong proximity}, we assume the following relations:
\begin{description}
\item[{\rm\bf (dsnP5)}] $\Phi(x) \in \Phi(\Int (A)) \Rightarrow x\ \snd\ A$. 
\item[{\rm\bf (dsnP6)}] $\{x\}\ \snd\ \{y\} \Leftrightarrow \Phi(x) = \Phi(y)$.  
\qquad \textcolor{blue}{$\blacksquare$} 
\end{description}

So, for example, if we take the strong proximity related to non-empty intersection of interiors, we have that $A\ \snd\ B \Leftrightarrow \Int A\ \dcap\ \Int B \neq \emptyset$ or either $A$ or $B$ is equal to $X$, provided $A$ and $B$ are not singletons; if $A = \{x\}$, then $\Phi(x) \in \Phi(\Int(B))$, and if $B$ is also a singleton, then $\Phi(x)=\Phi(y)$. 

The connections between $\snd,\dnear$ are summarized in Prop.~\ref{thm:sn-implies-dnear}.  
 
\begin{proposition}\label{thm:sn-implies-dnear}
Let $\left(X,\left\{\sn,\dnear,\snd\right\}\right)$ be a proximal relator space, $A,B\subset X$.  Then 
\begin{compactenum}[1$^o$]
\item For $A,B$ not equal to singletons, $A\ \snd\ B \Rightarrow \Int A\ \dcap\ \Int B\neq \emptyset \Rightarrow \Int A\ \dnear\ \Int B$.
\item $A\ \sn \ B \Rightarrow (\Int A\ \dcap\ \Int B)\ \snd \ B$.
\item $A\ \snd\ B \Rightarrow A\ \dnear\ B$.
\end{compactenum}
\end{proposition}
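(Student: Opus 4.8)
The plan is to dispatch the three items in turn, since $1^{o}$ and $3^{o}$ fall out directly from the axioms and characterizations already recorded, while $2^{o}$ is the one step that needs a genuine (if short) argument about interiors, and that is where I expect the only real difficulty to lie.

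For $1^{o}$, I would invoke the characterization of $\snd$ attached to nonempty descriptive intersection of interiors stated in the remark just before the proposition: for $A,B$ not singletons, $A\ \snd\ B$ forces $\Int A\ \dcap\ \Int B\neq\emptyset$ (the remaining alternative permitted by that characterization, namely $A$ or $B$ equal to $X$, is degenerate and is disposed of in a line, and in any case does not arise in a Vorono\"{i} tessellation where every region has nonempty interior). Once $\Int A\ \dcap\ \Int B\neq\emptyset$ is in hand, applying the descriptive Lodato axiom (dP2) to the pair of sets $\Int A$ and $\Int B$ gives $\Int A\ \dnear\ \Int B$, which closes $1^{o}$. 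Item $3^{o}$ is the descriptive mirror of Prop.~\ref{thm:sn-implies-near}: axiom (dsnP2) turns $A\ \snd\ B$ into $A\ \dcap\ B\neq\emptyset$, and (dP2) then yields $A\ \dnear\ B$ with no further work.

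Item $2^{o}$ is the crux. Starting from $A\ \sn\ B$, the spatial counterpart of the interior characterization (the earlier remark, $A\ \sn\ B\Leftrightarrow\Int A\cap\Int B\neq\emptyset$ for non-singletons, modulo the $X$ case) supplies a point of $\Int A\cap\Int B$, so this set is nonempty and, being the intersection of two open sets, is open. Writing $C:=\Int A\ \dcap\ \Int B$, I would note that every point of $\Int A\cap\Int B$ lies in $C$ (such a point trivially has matching descriptions with itself), so $\emptyset\neq\Int A\cap\Int B\subseteq C$ with the left-hand set open, whence $\Int A\cap\Int B\subseteq\Int C$. Intersecting with $\Int B$ gives $\Int C\cap\Int B\supseteq\Int A\cap\Int B\neq\emptyset$, and since a spatial intersection sits inside the corresponding descriptive intersection, $\Int C\ \dcap\ \Int B\neq\emptyset$. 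Axiom (dsnP4) then upgrades this to $C\ \snd\ B$, i.e. $(\Int A\ \dcap\ \Int B)\ \snd\ B$, as claimed. The points demanding care are the bookkeeping of the singleton and whole-space exceptions built into the $\sn$/$\snd$ characterizations, and checking that the inclusion $\Int A\cap\Int B\subseteq\Int C$ is legitimately justified — which it is, because the left side is an open subset of $C$ and $\Int C$ is the largest such.
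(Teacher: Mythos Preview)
Your treatment of $1^{o}$ and $3^{o}$ matches the paper's exactly: the paper invokes the interior characterization of $\snd$ followed by (dP2) for $1^{o}$, and cites (dsnP2) and (dP2) for $3^{o}$, just as you do.

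For $2^{o}$ your route diverges in spirit from the paper's, though only because the paper barely gives one: its entire argument is the line ``$1^{o}\Rightarrow 2^{o}$'', leaving the reader to guess how the descriptive statement about $\snd$ in $1^{o}$ is meant to yield a conclusion whose hypothesis is the spatial relation $A\ \sn\ B$. Your proof actually does the work: you use the spatial characterization $A\ \sn\ B\Leftrightarrow\Int A\cap\Int B\neq\emptyset$, observe that the (open) set $\Int A\cap\Int B$ sits inside $C=\Int A\ \dcap\ \Int B$ and hence inside $\Int C$, deduce $\Int C\ \dcap\ \Int B\neq\emptyset$, and then fire (dsnP4). This is correct and is a genuine argument where the paper offers only a pointer; what you gain is rigor, and what the paper gains is brevity at the cost of leaving the reader to reconstruct essentially your proof. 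One small point of care: your argument for $2^{o}$ tacitly uses the non-singleton, non-$X$ hypothesis in invoking the $\sn$ characterization, whereas the stated item $2^{o}$ carries no such restriction; the paper's one-line proof suffers from the same gap, so this is not a defect relative to the original, but you may want to flag the edge cases explicitly as you did for $1^{o}$.
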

\begin{proof}$\mbox{}$\\
1$^o$:
$A\ \snd\ B$ implies that interior of $A$ is descriptively near the interior of $B$.  Consequently, $\Int A\ \dcap\ \Int B\neq \emptyset$.  Hence, from Axiom (dP2), $\Int A\ \dnear\ \Int B$.\\
1$^o\Rightarrow\ 2^o$.
3$^o$: Immediate from Axioms (dsnP2) and (dP2).
\end{proof}

\subsection{Vorono\"{i} regions}
Let $E$ be the Euclidean plane, $S\subset E$ (set of mesh generating points), $s\in S$.  A Vorono\"{i} region (denoted by $V(s)$) is defined by
\[
V(s) = \left\{x\in E: \norm{x - s}\leq \norm{x - q}, \mbox{for all}\ q\in S\right\}\ \mbox{(Vorono\"{i} region)}.
\]

Let $X$ be a collection of Vorono\"{i} regions containing $N$, endowed with the strong proximity $\sn$.  A nucleus mesh cluster (denoted by $\Cn\ N$) in a Vorono\"{i} tessellation is defined by
\[
\Cn\ N = \left\{A\in X: \cl\ A\ \sn\ N\right\}\ \mbox{(Vorono\"{i} mesh nucleus cluster)}.\\
\]

\begin{example}
A partial view of a Vorono\"{i} tessellation of a plane surface is shown in Fig.~\ref{fig:nucleusCluster}.  The Vorono\"{i} region $N$ in this tessellation is the nucleus of a mesh cluster containing all of those polygons adjacent to $N$.  
\qquad \textcolor{blue}{\Squaresteel}
\end{example}

A \emph{concrete} (\emph{physical}) set $A$ of points $p$ that are described by their location and physical characteristics, {\em e.g.}, gradient orientation (angle of the tangent to $p$.  Let $\varphi(p)$ be the gradient orientation of $p$.   For example, each point $p$ with coordinates $(x,y)$ in the concrete subset $A$ in the Euclidean plane is described by a feature vector of the form $(x,y,\varphi(p(x,y))$. Nonempty concrete sets $A$ and $B$ have descriptive strong proximity (denoted $A\ \snd\ B$), provided $A$ and $B$ have points with matching descriptions.
In a region-based, descriptive proximity extends to both abstract and concrete sets~\cite[\S 1.2]{Peters2016ComputationalProximity}.  For example, every subset $A$ in the Euclidean plane has features such as area and diameter.  Let $(x,y)$ be the coordinates of the centroid $m$ of $A$.  Then $A$ is described by feature vector of the form $(x,y,area, diameter)$.  Then regions $A,B$ have descriptive proximity (denoted $A\ \snd\ B$), provided $A$ and $B$ have matching descriptions.

The notion of strongly proximal regions extends to convex sets.  A nonempty set $A$ is a \emph{convex set} (denoted $\cv A$), provided, for any pair of points $x,y\in A$, the line segment $\overline{xy}$ is also in $A$.  The empty set $\emptyset$ and a one-element set $\left\{x\right\}$ are convex by definition.  Let $\mathscr{F}$ be a family of convex sets.  From the fact that the intersection of any two convex sets is convex~\cite[\S 2.1, Lemma A]{Edelsbrunner2014}, it follows that
\[
\mathop{\bigcap}\limits_{A\in\mathscr{F}} A\ \mbox{is a convex set}.
\]
Convex sets $\cv A, \cv B$ are strongly proximal (denote $\cv A \sn\ \cv B$), provided $\cv A, \cv B$ have points in common.  Convex sets $\cv A, \cv B$ are descriptively strongly proximal (denoted $\cv A \snd\ \cv B$), provided $\cv A, \cv B$ have matching descriptions. 

Let $X$ be a Vorono\"{i} tessellation of a plane surface equipped with the strong proximity $\sn$ and descriptive strong proximity $\snd$ and let $A,N\in X$ be Vorono\"{i} regions.   The pair $\left(X,\left\{\sn,\snd\right\}\right)$ is an example of a proximal relator space~\cite{Peters2016relator}.
The two forms of nucleus clusters (ordinary nucleus cluster denoted by $\Cn$) and descriptive nucleus clusters are examples of mesh nerves~\cite[\S 1.10, pp. 29ff]{Peters2016ComputationalProximity}, defined by
\begin{align*}
\Cn N &= \left\{A\in X: A\ \sn\ N\right\}\ \mbox{(nucleus cluster)}.\\
\Cdn N  &= \left\{A\in X: A\ \snd\ N\right\}\ \mbox{(descriptive nucleus cluster)}.
\end{align*}

A nucleus cluster is \emph{maximal} (denoted by $\Cmn N$), provided $N$ has the highest number of adjacent polygons in a tessellated surface (more than one maximal cluster in the same mesh is possible).  Similarly, a descriptive nucleus cluster 
is maximal (denoted by $\Cmdn N$), provided $N$ has the highest number of polygons in a tessellated surface descriptively near $N$, {\em i.e.}, the description of each $A\in \Cmdn N$ matches the description of nucleus $N$ and the number of polygons descriptively near $N$ is maximal (again, more than one $\Cmdn N$ is possible in a Vorono\"{i} tessellation).\\
\vspace{3mm}

\begin{figure}[!ht]
\centering
\includegraphics[width=65mm]{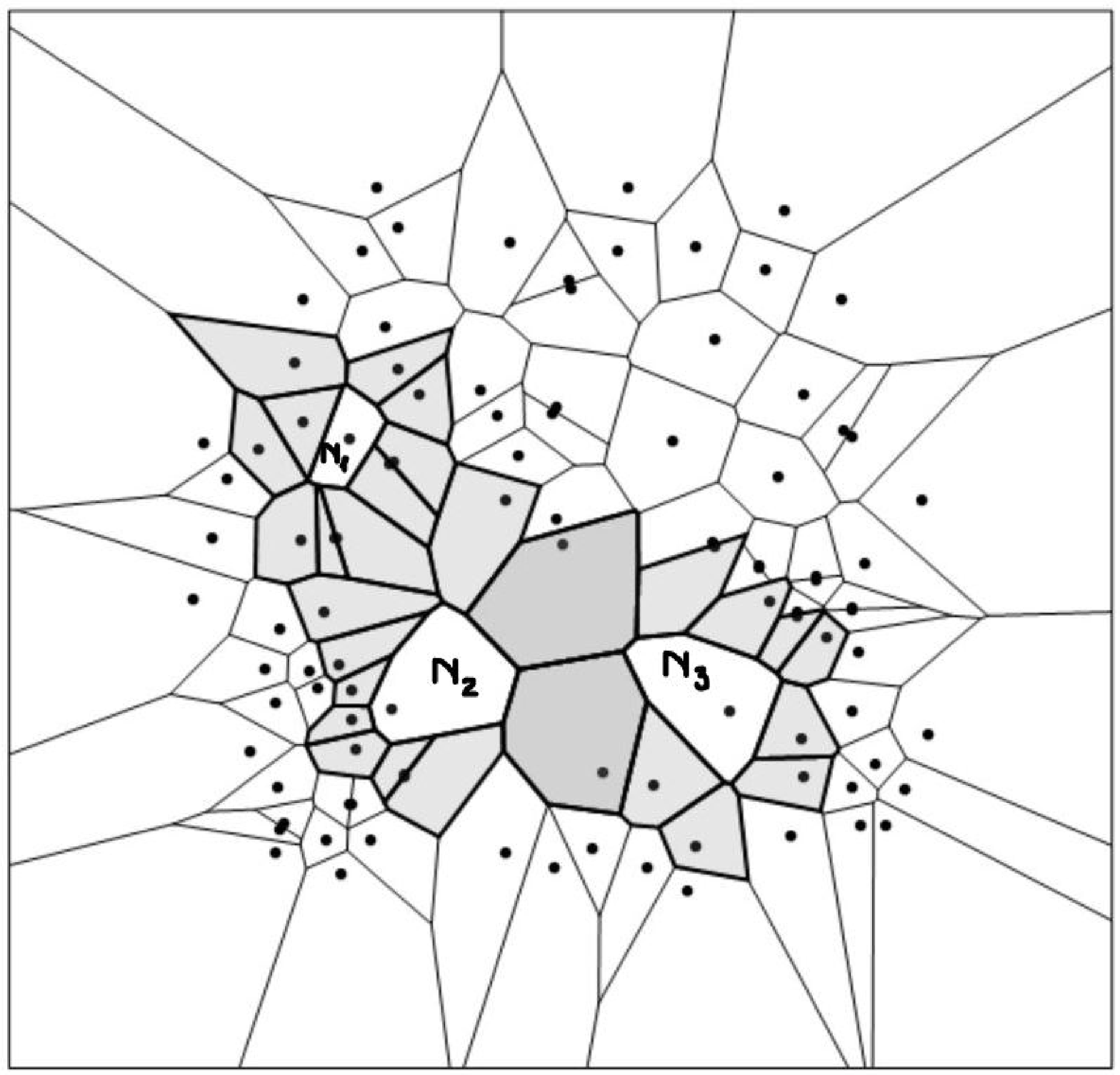}
\caption[]{ $\Cn\ N_1\ \sn\ \Cn\ N_2\ \mbox{and}\ \Cn\ N_1\ \snd\ \Cn\ N_2$}
\label{fig:N1N2N3maxClusters}
\end{figure}

\begin{example}$\mbox{}$\\
Let $X$ be the collection of Vorono\"{i} regions in a tessellation of a subset of the Euclidean plane shown in Fig.~\ref{fig:N1N2N3maxClusters} with nuclei $N_1,N_2,N_3\in X$. In addition, let $2^X$ be the family of all subsets of Vorono\"{i} regions in $X$ containing maximal nucleus clusters $\Cn N_1,\Cn N_2,\Cn N_3\in 2^X$ in the tessellation.  Then, for example, $\Int\Cn N_2\ \cap\ \Int\Cn N_3\neq\emptyset$, since $\Cn N_2,\Cn N_3$ share Vorono\"{i} regions.  Hence, $\Cn N_2\ \sn\ \Cn N_3\neq\emptyset$ (from Axiom (snN4)). Similarly, 
$\Cn N_1\ \sn\ \Cn N_2$.

Let $\Phi(A)$ the description of a Vorono\"{i} equal the number of sides of $A\in X$.  Since the nuclei $N_1,N_2,N_3$ have matching descriptions, $\Int\Cn N_1\ \dcap\ \Int\Cn N_2\neq \emptyset$. Consequently, $\Cn N_1\ \snd\ \Cn N_2$ (from Axiom (dsnP4)).  Similarly, $\Cn N_1\ \snd\ \Cn N_3$ and $\Cn N_2\ \snd\ \Cn N_3$.
\qquad \textcolor{blue}{\Squaresteel}
\end{example}

\setlength{\intextsep}{0pt}
\begin{wrapfigure}[13]{R}{0.30\textwidth}
\begin{minipage}{3.8 cm}
\centering
\begin{pspicture}
 (-0.5,-1.0)(3.5,2.5) 
\providecommand{\PstPolygonNode}{%
  \psdots[showpoints=true,linecolor=black](1;\INode)}
\PstPentagon[unit=0.75]
\psline[showpoints=true,linecolor=gray](-0.55,-0.83)(-1.10,-0.83) 
\psline[showpoints=true,linecolor=gray](-0.55,-0.83)(-0.30,0.12)
\psline[showpoints=true,linecolor=gray](-1.10,-0.83)(-1.18,0.12)
\psline[showpoints=true,linecolor=gray](-0.75,1.50)(0.08,2.12) 
\psline[showpoints=true,linecolor=gray](0.08,2.12)(0.38,1.82)
\psline[showpoints=true,linecolor=gray](0.38,1.82)(-0.00,1.00)
\rput(-1.05,0.80){\Large \textcolor{black}{$\boldsymbol{N}$}}
\rput(-0.20,1.50){\Large \textcolor{black}{$\boldsymbol{A_{_{1}}}$}}
\rput(-0.80,-0.30){\Large \textcolor{black}{$\boldsymbol{A_{_{2}}}$}}
\end{pspicture}
\caption[]{$\mbox{}$\\ {MNC Spokes}}
\label{fig:MNCnerve}
\end{minipage}
\end{wrapfigure}

\section{Main Results}
Homotopy types are introduced in~\cite[\S III.2]{Edelsbrunner1999} and lead to significant results for Vorono\"{i} maximal nucleus clusters.  

Let $f,g:X\longrightarrow Y$ be two continuous maps.  A \emph{homotopy} between $f$ and $g$ is a continuous map $H:X\times[0,1]\longrightarrow Y$ so that $H(x,0) = f(x)$ and $H(x,1) = g(x)$.  The sets $X$ and $Y$ are \emph{homotopy equivalent}, provided there are continuous maps $f: X\longrightarrow Y$ and $g:Y\longrightarrow X$ such that $g\circ f \simeq \mbox{id}_X$ and $f\circ g \simeq \mbox{id}_Y$.  This yields an equivalence relation $X\simeq Y$.  In addition, $X$ and $Y$ have the same \emph{homotopy type}, provided $X$ and $Y$ are homotopy equivalent.  

Let $\mathscr{F}$ be a finite collection of sets.  An \emph{Edelsbrunner-Harer nerve} (denoted by $\mbox{Nrv}\ \mathscr{F}$) consists of all nonempty subcollections of $\mathscr{F}$ that have a nonvoid common intersection, {\em i.e.},
\[
\mbox{Nrv}\ \mathscr{F} = \left\{X\in \mathscr{F}: \bigcap X\neq \emptyset\right\}.
\]

Let $\mathscr{F}_{_{MNC}}$ be a collection of polygons in a Vorono\"{i} MNC endowed with the strong proximity $\sn$, $A$ be a Vorono\"{i} region in a MNC $\Cn N$ with nucleus $N$ and let subscollection $\mathscr{S}_A = \left\{A,N\right\}\in\Cn N$.  The pair $\left(\mathscr{F}_{_{MNC}},\sn\right)$ is a proximity space.  For each MNC $\Cn N$ endowed with $\sn$, the nucleus $N$ together with its adjacent polygons is a Vorono\"{i} structure (denoted by $\mbox{Nrv}\mathscr{F}_{_{MNC}}$) defined by
\[
\mbox{Nrv}\mathscr{F}_{_{MNC}} = \left\{\mathscr{S}\in \Cn N: \left(N,A\right)\in \sn\ \mbox{for}\ A\in \mathscr{S}\right\}\ \mbox{(MNC nerve)}.
\]

Each pair $\left(N,A\right)\in \sn$ in $\mbox{Nrv}\mathscr{F}_{_{MNC}}$ is called a \emph{\bf spoke} (denoted by $\mathscr{S}_A$), with a shape similar to the spoke in a wheel.  A spoke contains a Vorono\"{i} region $A\in \Cn N$ that shares an edge with $N$.  Hence, the $A\ \sn\ N$, {\em i.e.}, there is a strong proximity between the subsets in a spoke.

\begin{example}\label{ex:MNCnerve}
A pair of spokes $\mathscr{S}_{A_1},\mathscr{S}_{A_2}$ in a fragment of an MNC $\Cn N$ with nucleus $N$ is represented in Fig.~\ref{fig:MNCnerve}.  
\qquad \textcolor{blue}{\Squaresteel}
\end{example}

Every MNC $\Cn N$ is a finite collection of closed convex sets in the Euclidean plane.  Let $\Cn N$ be endowed with the strong proximity $\sn$.  All non-nucleus polygons in $\Cn N$ share an edge with $N$.  The collection of spokes $\mathscr{S}_A\in \Cn N$ each contain the nucleus $N$, which is common to all of the spokes, {\em i.e.}, the spokes in $\mbox{Nrv}\mathscr{F}_{_{MNC}}$ have a nonvoid common intersection.  Let $\mathscr{S}_A,\mathscr{S}_{A'}$ be spokes in $\Cn N$ that share nucleus $N$.  Consequently, $\Int\left(\mathscr{S}_A\right)\ \cap\ \Int\left(\mathscr{S}_{A'}\right) \neq\emptyset$ implies $\mathscr{S}_A\ \sn\ \mathscr{S}_{A'}$ (from Axiom (snN4)).  Hence, $\mbox{Nrv}\mathscr{F}_{_{MNC}}$ is an Edelsbrunner-Harer nerve.  From this, we obtain the result in Lemma~\ref{lem:MNCnerves}.

\begin{lemma}\label{lem:MNCnerves}
Let $\mathscr{F}_{_{MNC}}$ be a collection of polygons in a Vorono\"{i} MNC endowed with the strong proximity $\sn$.
The structure $\mbox{Nrv}\mathscr{F}_{mnc}$ is an Edelsbrunner-Harer nerve.
\end{lemma}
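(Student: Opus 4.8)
The plan is to check the definition of an Edelsbrunner-Harer nerve directly against the family of spokes that constitutes $\mbox{Nrv}\mathscr{F}_{_{MNC}}$. Recall that such a nerve is a family of subcollections of a fixed collection of sets in which every member has a nonvoid common intersection; so it suffices to show (i) that each individual spoke $\mathscr{S}_A=\{A,N\}$ is a subcollection of $\mathscr{F}_{_{MNC}}$ whose members have a nonvoid common intersection, and (ii) that the whole family of spokes (hence every subcollection of it) also has a nonvoid common intersection, so that $\mbox{Nrv}\mathscr{F}_{_{MNC}}$ is a genuine nerve rather than an empty structure.

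For (i), I would invoke the geometric facts already recorded above: every MNC $\Cn N$ is a \emph{finite} collection of \emph{closed convex} Vorono\"{i} polygons, and each non-nucleus region $A\in\Cn N$ shares an edge with the nucleus $N$. Hence $A\cap N\neq\emptyset$ (their shared edge), so by Lodato Axiom (P2) $A\ \near\ N$, and, as noted in the text, $A\ \sn\ N$; thus each spoke $\mathscr{S}_A=\{A,N\}$ is a two-element subcollection of $\mathscr{F}_{_{MNC}}$ whose two members meet.

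For (ii), the key observation is that the nucleus $N$ belongs to \emph{every} spoke, so $N$ lies in the common intersection of the whole family $\{\mathscr{S}_A: A\in\Cn N,\ A\neq N\}$ and, a fortiori, of each of its subcollections. Realizing a spoke $\mathscr{S}_A$ geometrically as $A\cup N$, we get $\bigcap_A(A\cup N)\supseteq N$; since $N$ is a full-dimensional polygon, $\Int N\neq\emptyset$ and $\Int N\subseteq\Int(\mathscr{S}_A)\cap\Int(\mathscr{S}_{A'})$ for any two spokes, so Axiom (snN4) yields $\mathscr{S}_A\ \sn\ \mathscr{S}_{A'}$. Therefore the family of spokes has a nonvoid common intersection (indeed one with nonempty interior), and combining this with (i) shows that $\mbox{Nrv}\mathscr{F}_{_{MNC}}$ satisfies the definition of an Edelsbrunner-Harer nerve.

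The only point that needs care is the bookkeeping across the two levels of the construction --- the polygons that sit inside a single spoke versus the spokes that sit inside the nerve --- and in particular being explicit about the sense of ``common intersection'' at each level: within a spoke it is the shared edge $A\cap N$, while across spokes it is $\bigcap_A(A\cup N)\supseteq N$. Once this is pinned down, the verification uses nothing beyond (P2), (snN2), and (snN4), so no genuine obstacle remains; this is the routine structural step that sets up the homotopy-type results to follow.
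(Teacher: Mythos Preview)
Your proposal is correct and follows essentially the same route as the paper: both arguments hinge on the observation that every spoke contains the nucleus $N$, so the family of spokes has nonvoid common intersection. The paper packages this via Axiom~(snN4) (interiors of two spokes meet in $\Int N$, hence $\mathscr{S}_A\ \sn\ \mathscr{S}_{A'}$) and then Axiom~(snN2) to recover $\mathscr{S}_A\cap\mathscr{S}_{A'}\neq\emptyset$, whereas you make the containment $N\subseteq\bigcap_A(A\cup N)$ explicit and additionally supply the within-spoke check (your part~(i)) that $A\cap N\neq\emptyset$ via the shared edge, which the paper leaves implicit in the surrounding text. The extra bookkeeping you do across the two levels---polygons inside a spoke versus spokes inside the nerve---is a genuine gain in clarity, but the underlying idea is the same.
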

\begin{proof}
Let $\mathscr{S}_A,\mathscr{S}_{A'}$ be a pair of spokes in a maximal nucleus cluster MNC $\Cn N$.  Since $\mathscr{S}_A\ \sn\ \mathscr{S}_{A'}$ have $N$ in common, $\mathscr{S}_A\sn \mathscr{S}_{A'}$ implies $\mathscr{S}_A\cap \mathscr{S}_{A'}\neq \emptyset$ (from Axiom (snN2)).  This holds true for all spokes in $\Cn N$.  Consequently, $\mathop{\bigcap}\limits_{\mathscr{S}_A\in \Cn N} \mathscr{S}_A\neq \emptyset$.  Hence, the structure $\mbox{Nrv}\mathscr{F}_{_{MNC}}$ is an Edelsbrunner-Harer nerve.
\end{proof}

\begin{theorem}\label{thm:sn-nerve}
Let $\left(\mbox{Nrv}\mathscr{F}_{_{MNC}},\left\{\near,\dnear,\sn\right\}\right)$ be a proximal relator space, spokes $\mathscr{S}_A,\mathscr{S}_{A'}\in \mbox{Nrv}\mathscr{F}_{_{MNC}}$.  Then 
\begin{compactenum}[1$^o$]
\item $\mathscr{S}_A\ \sn\ \mathscr{S}_{A'} \Rightarrow \mathscr{S}_A\ \near\ \mathscr{S}_{A'}$.
\item $\mathscr{S}_A\ \sn\ \mathscr{S}_{A'} \Rightarrow \mathscr{S}_A\ \dnear\ \mathscr{S}_{A'}$.
\end{compactenum}
\end{theorem}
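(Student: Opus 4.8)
The plan is to derive this theorem as a direct corollary of Proposition~\ref{thm:sn-implies-near}. First I would observe that $\left(\mbox{Nrv}\mathscr{F}_{_{MNC}},\left\{\near,\dnear,\sn\right\}\right)$ is a proximal relator space by hypothesis, and that the spokes $\mathscr{S}_A,\mathscr{S}_{A'}$ are nonempty subsets of this space (each contains at least the nucleus $N$ together with a Vorono\"{i} region $A$ that shares an edge with $N$). Hence the two sets $\mathscr{S}_A$ and $\mathscr{S}_{A'}$ satisfy exactly the hypotheses of Proposition~\ref{thm:sn-implies-near} with $A \mapsto \mathscr{S}_A$ and $B \mapsto \mathscr{S}_{A'}$.

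For part $1^o$, I would simply invoke Proposition~\ref{thm:sn-implies-near}(1$^o$): $\mathscr{S}_A\ \sn\ \mathscr{S}_{A'}$ implies, via Axiom (snN2), that $\mathscr{S}_A\ \cap\ \mathscr{S}_{A'}\neq\emptyset$ (indeed the intersection contains $N$, by the structure of spokes noted just before Lemma~\ref{lem:MNCnerves}), and then the Lodato Axiom (P2) gives $\mathscr{S}_A\ \near\ \mathscr{S}_{A'}$. For part $2^o$, I would invoke Proposition~\ref{thm:sn-implies-near}(2$^o$): from the common point(s) of $\mathscr{S}_A$ and $\mathscr{S}_{A'}$ we get a point $x$ with $\Phi(x)=\Phi(x)$ trivially in both, so $\mathscr{S}_A\ \dcap\ \mathscr{S}_{A'}\neq\emptyset$, and then the descriptive Lodato Axiom (dP2) yields $\mathscr{S}_A\ \dnear\ \mathscr{S}_{A'}$.

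There is essentially no obstacle here: the content is already packaged in Proposition~\ref{thm:sn-implies-near}, and the only thing to check is that spokes are legitimate nonempty subsets of the relator space so that the proposition applies — which is immediate from the definition of a spoke. The one point worth stating explicitly (rather than just citing the proposition) is \emph{why} the two spokes have a common point in the first place, namely that every spoke in $\mbox{Nrv}\mathscr{F}_{_{MNC}}$ contains the nucleus $N$; this is the geometric fact that makes the strong-nearness hypothesis non-vacuous, though strictly it is not even needed for the implication, since the implication is from $\sn$ and the proposition does the rest. So the proof will be two short paragraphs, one per part, each reducing to the corresponding clause of Proposition~\ref{thm:sn-implies-near}.
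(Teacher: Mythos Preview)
Your proposal is correct and follows essentially the same approach as the paper: both parts are reduced to Proposition~\ref{thm:sn-implies-near}, with $\mathscr{S}_A\cap\mathscr{S}_{A'}\neq\emptyset$ (via the shared nucleus $N$) feeding into Axioms (P2) and (dP2). The paper additionally cites Lemma~\ref{lem:MNCnerves} to record that the hypothesis $\mathscr{S}_A\ \sn\ \mathscr{S}_{A'}$ actually holds for every pair of spokes, but as you rightly observe this is about non-vacuousness and is not needed for the implication itself.
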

\begin{proof}$\mbox{}$\\
1$^o$: From Lemma~\ref{lem:MNCnerves}, $\mbox{Nrv}\mathscr{F}_{_{MNC}}$ is an Edelsbrunner-Harer nerve.  Consequently, $\mathscr{S}_A\ \sn\ \mathscr{S}_{A'}$ for every pair of spokes $\mathscr{S}_A,\mathscr{S}_{A'}$ in the nerve. 
Then, $\mathscr{S}_A\ \sn\ \mathscr{S}_{A'}$ implies $\mathscr{S}_A\ \cap\ \mathscr{S}_{A'}\neq \emptyset$, which implies $\mathscr{S}_A\ \near\ \mathscr{S}_{A'}$ (from Prop.~\ref{thm:sn-implies-near}).\\
2$^o$: Spokes $\mathscr{S}_A,\mathscr{S}_{A'}$ have nucleus $N$ in common.  Hence, $\mathscr{S}_A\ \dcap\ \mathscr{S}_{A'}\neq \emptyset$.  Then, from Prop.~\ref{thm:sn-implies-near}, $\mathscr{S}_A\ \dcap\ \mathscr{S}_{A'} \neq \emptyset \Rightarrow\ \mathscr{S}_A\ \dnear\ \mathscr{S}_{A'}$. This gives the desired result for each pair of spokes in the nerve.
\end{proof}

\begin{theorem}\label{EHnerve}{\rm ~\cite[\S III.2, p. 59]{Edelsbrunner1999}}
Let $\mathscr{F}$ be a finite collection of closed, convex sets in Euclidean space.  Then the nerve of $\mathscr{F}$ and the union of the sets in $\mathscr{F}$ have the same homotopy type.
\end{theorem}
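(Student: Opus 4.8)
The plan is to compare $U\assign\bigcup_{A\in\mathscr{F}}A$ with the geometric realization $\abs{\mathrm{Nrv}\,\mathscr{F}}$ of the nerve by constructing an explicit pair of continuous maps $f\colon U\to\abs{\mathrm{Nrv}\,\mathscr{F}}$ and $g\colon\abs{\mathrm{Nrv}\,\mathscr{F}}\to U$ and showing they are mutually homotopy inverse. Convexity is used at exactly two places: first, every nonempty finite intersection $\bigcap_{A\in\sigma}A$ of members of $\mathscr{F}$ is again convex (the fact cited above from~\cite{Edelsbrunner2014}) and hence contractible, so $\mathscr{F}$ is a \emph{good cover} of $U$; second, a map sending each vertex $v_A$ of a simplex of the nerve to a point of $A$ extends \emph{affinely} over that simplex with image inside the corresponding convex intersection.

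First I would build $f$: since $\mathscr{F}$ is finite, after a standard thickening (replacing each $A$ by a slightly larger open convex neighbourhood that deformation retracts onto it without changing which subcollections have nonempty common intersection) I may assume the cover is open and pick a subordinate partition of unity $\{\varphi_A\}$, say $\varphi_A=d(\,\cdot\,,X\setminus A)\big/\sum_B d(\,\cdot\,,X\setminus B)$, and set $f(x)=\sum_A\varphi_A(x)\,v_A$, which is well defined and continuous because the $A$ with $\varphi_A(x)>0$ all contain $x$ and hence span a simplex of $\mathrm{Nrv}\,\mathscr{F}$. Then I would build $g$: choose a point $g(v_A)\in A$ for each vertex and extend affinely over each simplex $\sigma=\{v_{A_0},\dots,v_{A_k}\}$ by $g(\sum_i t_i v_{A_i})=\sum_i t_i\,g(v_{A_i})\in A_0\cap\dots\cap A_k$; these agree on faces, so $g$ is continuous into $U$. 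The composite $g\circ f\simeq\mathrm{id}_U$ is then immediate: for $x\in U$ the point $f(x)$ lies in the simplex on $\{v_A : x\in A\}$, so $g(f(x))$ and $x$ both lie in the convex set $\bigcap_{A\ni x}A$, and the straight-line homotopy $(x,t)\mapsto(1-t)x+t\,g(f(x))$ stays inside that set, hence inside $U$.

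The main obstacle is $f\circ g\simeq\mathrm{id}_{\abs{\mathrm{Nrv}\,\mathscr{F}}}$: for $p$ in the open star of a simplex $\sigma$ we have $g(p)\in\bigcap_{A\in\sigma}A$, but $f(g(p))=\sum_B\varphi_B(g(p))\,v_B$ may acquire extra vertices $v_B$ for sets $B\notin\sigma$ that happen to contain $g(p)$, so $f\circ g$ only displaces points within a simplex having $\sigma$ as a face rather than within $\sigma$ itself. I would settle this with the acyclic-carrier theorem: both $\mathrm{id}$ and $f\circ g$ are carried by the carrier $\sigma\mapsto\overline{\mathrm{St}}\,\sigma$, which is contractible by the good-cover property, so the two maps are carrier-homotopic; concretely one builds the homotopy one simplex at a time over the skeleta of $\abs{\mathrm{Nrv}\,\mathscr{F}}$, contractibility of each carrier killing the successive obstruction to extending. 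An alternative, map-free route compares $U$ and $\abs{\mathrm{Nrv}\,\mathscr{F}}$ via the blow-up complex $\coprod_\sigma\abs{\sigma}\times\bigl(\bigcap_{A\in\sigma}A\bigr)\big/\!\sim$, whose two projections are homotopy equivalences because their point-fibres are contractible. Either way, reconciling the combinatorics of the nerve with the good-cover hypothesis is the one genuinely nontrivial step; everything else is straight-line bookkeeping legitimized by convexity.
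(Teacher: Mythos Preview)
The paper does not supply its own proof of this theorem; it is quoted as a citation from Edelsbrunner--Harer, so there is nothing in the paper to compare against directly. Your outline is essentially the classical nerve-lemma argument and is correct in spirit, but there is a genuine gap in your construction of $g$.

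You claim that for a simplex $\sigma=\{v_{A_0},\dots,v_{A_k}\}$ the affine extension satisfies $\sum_i t_i\,g(v_{A_i})\in A_0\cap\dots\cap A_k$. This is false: you only chose $g(v_{A_i})\in A_i$, not in the full intersection, and a convex combination of points each lying in a \emph{different} convex set need not lie in their intersection---nor even in their union. For instance, with $A_0=[-10,10]\times[-1,1]$ and $A_1=[-1,1]\times[-10,10]$ (so $A_0\cap A_1\neq\emptyset$ and $\{v_{A_0},v_{A_1}\}$ is an edge of the nerve), the midpoint of $(-10,0)\in A_0$ and $(0,10)\in A_1$ is $(-5,5)$, which lies outside $A_0\cup A_1$. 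So your $g$ as written need not take values in $U$ at all, and the subsequent straight-line homotopy for $g\circ f$ is not yet justified.

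The standard repair is to pass to the barycentric subdivision: for each simplex $\sigma$ of the nerve pick $p_\sigma\in\bigcap_{A\in\sigma}A$, set $g(\hat\sigma)=p_\sigma$ on the new vertices, and extend affinely over each subdivision simplex. Such a simplex corresponds to a chain $\sigma_0\subset\cdots\subset\sigma_k$, and since $p_{\sigma_j}\in\bigcap_{A\in\sigma_j}A\subseteq\bigcap_{A\in\sigma_0}A$ for every $j$, the affine span now lands in the convex set $\bigcap_{A\in\sigma_0}A\subseteq U$. With this corrected $g$, your straight-line homotopy for $g\circ f\simeq\mathrm{id}_U$ and your carrier argument for $f\circ g\simeq\mathrm{id}$ both go through. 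Your alternative blow-up route sidesteps the issue entirely and is also fine.
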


\begin{theorem}
Let the nucleus cluster $\Cn N$ be a finite collection of closed, convex sets in a Vorono\"{i} mesh $V$ in the Euclidean plane.  The nerve $\mbox{Nrv}\mathscr{F}_{MNC}$ in $\Cn N$ and the union of the sets in $\Cn N$ have the same homotopy type.
\end{theorem}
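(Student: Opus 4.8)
The plan is to derive the statement as a direct application of the Edelsbrunner-Harer nerve theorem (Theorem~\ref{EHnerve}), after checking that $\Cn N$ meets its hypotheses. First I would record two structural facts. (i)~Every Vorono\"{i} region $V(s)$ is an intersection of closed half-planes $\left\{x\in E:\norm{x-s}\leq\norm{x-q}\right\}$, $q\in S$, hence a closed, convex polygon in the Euclidean plane. (ii)~A maximal nucleus cluster $\Cn N$ is the nucleus $N$ together with the finitely many polygons that share an edge with $N$; thus $\Cn N=\mathscr{F}_{_{MNC}}$ is a \emph{finite} collection of \emph{closed, convex} subsets of $E\subset\mathbb{R}^2$, which is exactly the hypothesis of Theorem~\ref{EHnerve} with $\mathscr{F}=\Cn N$.

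Second, I would pin down the homotopy type of the MNC nerve itself. By Lemma~\ref{lem:MNCnerves}, $\mbox{Nrv}\mathscr{F}_{_{MNC}}$ is an Edelsbrunner-Harer nerve: each spoke $\mathscr{S}_A=\{A,N\}$ has nonvoid common intersection $A\cap N$ (the shared edge), and $N$ belongs to every spoke, so $\mathop{\bigcap}\limits_{\mathscr{S}_A\in\Cn N}\mathscr{S}_A\neq\emptyset$. As an abstract simplicial complex, $\mbox{Nrv}\mathscr{F}_{_{MNC}}$ has the regions of $\Cn N$ as vertices and the spokes as its $1$-simplices, so its geometric realization is a star graph centred at $N$ and is contractible. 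I would then observe that passing from $\mbox{Nrv}\mathscr{F}_{_{MNC}}$ to the full Edelsbrunner-Harer nerve $\mbox{Nrv}\ \Cn N$ only adjoins simplices whose vertex sets all contain $N$ (fans of triangles $\{N,A_i,A_j\}$ at the vertices of $N$), so $\mbox{Nrv}\ \Cn N$ is a cone with apex $N$, hence again contractible, and in particular has the same homotopy type as $\mbox{Nrv}\mathscr{F}_{_{MNC}}$.

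Third, I would invoke Theorem~\ref{EHnerve} for the finite family of closed convex sets $\Cn N$, which yields that $\mbox{Nrv}\ \Cn N$ and $\bigcup_{A\in\Cn N}A$ have the same homotopy type. Composing this with the homotopy equivalence of the previous paragraph --- homotopy equivalence being an equivalence relation on spaces, as recalled in the discussion of homotopy type above --- I obtain $\mbox{Nrv}\mathscr{F}_{_{MNC}}\simeq\bigcup_{A\in\Cn N}A$, which is the assertion.

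The step I expect to be the real obstacle is the middle one: arguing rigorously that the spoke description of $\mbox{Nrv}\mathscr{F}_{_{MNC}}$ carries the same homotopy type as the genuine Edelsbrunner-Harer nerve of $\Cn N$ to which Theorem~\ref{EHnerve} applies, rather than merely coinciding with it informally. A clean way to sidestep this bookkeeping is to forgo the spokes: observe that $\Cn N$ is a good cover of $\bigcup_{A\in\Cn N}A$ (its members and all their nonempty intersections are convex, hence contractible), apply Theorem~\ref{EHnerve} directly to $\mathscr{F}=\Cn N$, and then use that both $\mbox{Nrv}\mathscr{F}_{_{MNC}}$ (a spanning star graph) and $\bigcup_{A\in\Cn N}A$ are contractible to conclude that they have the same homotopy type.
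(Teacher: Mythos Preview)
Your proof is correct, and its overall skeleton---invoke Lemma~\ref{lem:MNCnerves} and then Theorem~\ref{EHnerve}---is exactly the paper's approach. The paper's argument is literally two sentences: Lemma~\ref{lem:MNCnerves} says $\mbox{Nrv}\mathscr{F}_{_{MNC}}$ is an Edelsbrunner--Harer nerve, and then Theorem~\ref{EHnerve} is quoted to conclude.

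Where you differ is in the middle step you flagged as the ``real obstacle.'' The paper silently identifies the spoke-based object $\mbox{Nrv}\mathscr{F}_{_{MNC}}$ with the nerve to which Theorem~\ref{EHnerve} literally applies, namely the full nerve of the family $\Cn N$ of closed convex polygons. You instead distinguish the two and supply a bridging argument: the spoke complex is a star, the full nerve $\mbox{Nrv}\,\Cn N$ is a cone with apex $N$, both are contractible, so they share a homotopy type and the conclusion follows by transitivity. This is a genuine addition---it repairs a gap the paper leaves open---and it costs very little, since the contractibility of a cone or star is immediate. Your alternative ``sidestep'' at the end (use the good-cover property and note both spaces are contractible) is also valid and arguably the cleanest route. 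Either way, your version is more rigorous than the paper's own proof while staying faithful to its intended strategy.
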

\begin{proof}
Let $\Cn N$ be a MNC be nucleus $N$ in a Vorono\"{i} mesh.   From Lemma~\ref{lem:MNCnerves}, $\mbox{Nrv}\mathscr{F}_{_{MNC}}$ is a Edelsbrunner-Harer nerve.  From Theorem~\ref{EHnerve}, we have that the union of the sets in $\Cn N$ and $\mbox{Nrv}\mathscr{F}_{_{MNC}}$ have the same homotopy type.
\end{proof}
   
\begin{theorem}\label{lem:sndMNCnerves}
Let $X$ be a finite collection of MNC Edelsbrunner-Harer nerves $\mbox{Nrv}\mathscr{F}_{_{MNC}}$ in a Vorono\"{i} mesh with nuclei $N$ in the Euclidean plane and let $X$ be equipped with the relator $\left\{\sn,\snd\right\}$ with strongly close mesh nerves.  Each nucleus $N$ has a description $\Phi(N) = \mbox{number of sides of}\ N$.  Then $\mathop{\bigcap}\limits_{\Phi}\mbox{Nrv}\mathscr{F}_{_{MNC}} \neq \emptyset$.
\end{theorem}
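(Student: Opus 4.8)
The plan is to reduce the claim to the single observation that the nuclei of all the nerves in $X$ carry one and the same description, and then to exhibit a concrete element of the descriptive intersection. Throughout I read $\mathop{\bigcap}\limits_{\Phi}\mbox{Nrv}\mathscr{F}_{_{MNC}}$ as the set of those points (spokes) whose description occurs in every nerve belonging to $X$, which is the natural finite-family analogue of the binary operation $\dcap$.

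\textbf{Step 1 (pairwise matching of nucleus descriptions).} First I would fix two nerves $\mbox{Nrv}\mathscr{F}_{_{MNC}}$ and $\mbox{Nrv}\mathscr{F}'_{_{MNC}}$ in $X$, with nuclei $N$ and $N'$. By hypothesis $X$ is equipped with the relator $\left\{\sn,\snd\right\}$ and its mesh nerves are strongly close, so in particular $\mbox{Nrv}\mathscr{F}_{_{MNC}}\ \snd\ \mbox{Nrv}\mathscr{F}'_{_{MNC}}$. Applying Axiom (dsnP2) gives $\mbox{Nrv}\mathscr{F}_{_{MNC}}\ \dcap\ \mbox{Nrv}\mathscr{F}'_{_{MNC}}\neq\emptyset$, i.e. the two nerves contain elements with matching descriptions. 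Since the description carried by a mesh nerve is that of its nucleus, namely $\Phi(N)$ = number of sides of $N$, this forces $\Phi(N) = \Phi(N')$ (mirroring the reasoning already used in the example preceding Fig.~\ref{fig:MNCnerve} for $N_1,N_2,N_3$).

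\textbf{Step 2 (global matching via finiteness).} Because $X$ is finite, the set of nuclei occurring among the nerves of $X$ is finite, and ``having the same number of sides'' is an equivalence relation on it. Step~1 shows any two such nuclei are related, so they all lie in a single class: $\Phi(N)=\Phi(N')$ for \emph{every} pair of nuclei of nerves in $X$. Now choose any nucleus $N_0$ occurring in $X$ and, in each nerve $\mbox{Nrv}\mathscr{F}_{_{MNC}}$, the spoke through its own nucleus; every such spoke has description equal to $\Phi(N_0)$. Hence the spoke $\mathscr{S}$ containing $N_0$ satisfies $\Phi(\mathscr{S})\subseteq\Phi(\mbox{Nrv}\mathscr{F}_{_{MNC}})$ for each nerve in $X$, so $\mathscr{S}\in\mathop{\bigcap}\limits_{\Phi}\mbox{Nrv}\mathscr{F}_{_{MNC}}$, and the descriptive intersection is nonempty.

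\textbf{Main obstacle.} The only delicate point is bookkeeping about the meaning of $\mathop{\bigcap}\limits_{\Phi}$ over a whole family, since $\dcap$ is a binary, non-associative operation; I would pin this down explicitly (as above) and verify that a spoke through a nucleus of the common side-count is precisely a witness. Everything else is a direct application of Axiom (dsnP2) together with the finiteness of $X$.
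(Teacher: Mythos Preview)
Your Step~1 contains a genuine gap. From the hypothesis that two nerves are descriptively strongly near and Axiom~(dsnP2) you correctly obtain
\[
\mbox{Nrv}\mathscr{F}_{_{MNC}}\ \dcap\ \mbox{Nrv}\mathscr{F}'_{_{MNC}}\neq\emptyset,
\]
but this only says that \emph{some} element of the first nerve has the same description as \emph{some} element of the second. A nerve contains many spokes $\{A,N\}$, and the polygons $A$ adjacent to the nucleus generally have varying numbers of sides; there is no reason the matching pair singled out by $\dcap$ must involve the nuclei. Your sentence ``the description carried by a mesh nerve is that of its nucleus'' is exactly the missing step, and it is not justified by anything in the setup.

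The paper closes this gap differently: it does not try to extract $\Phi(N)=\Phi(N')$ from the $\snd$ hypothesis at all. Instead it uses the structural meaning of \emph{maximal} nucleus cluster. By definition, an MNC nucleus has the largest number of adjacent polygons in the tessellation, and the number of sides of a Vorono\"{i} region equals the number of its neighbours. Hence every MNC nucleus in the mesh has the same (maximal) side count, so $\Phi(N_1)=\Phi(N_2)$ for any two MNC nuclei automatically. Only after this is established does the paper invoke the proximity axioms (dsnP2, Proposition~\ref{thm:sn-implies-dnear}, Proposition~\ref{prop:dnear}) to pass to $\mathscr{N}\ \dcap\ \mathscr{N'}\neq\emptyset$.

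Your Step~2, by contrast, is fine and in fact more explicit than the paper about producing a witness in the full descriptive intersection and about handling the passage from pairwise to global; the paper simply asserts the final nonemptiness after the pairwise argument. If you replace your Step~1 by the maximality observation above, the rest of your outline goes through.
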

\begin{proof}
Each $\mbox{Nrv}\mathscr{F}_{_{MNC}}$ is a collection of Vorono\"{i} regions containing a nucleus polygon $N$ with the same number of sides, since $\mbox{Nrv}\mathscr{F}_{_{MNC}}\in \Cn N$, which is maximal.
Let $\mathscr{N},\mathscr{N'}\in X$ be nerves with nuclei $N_1,N_2$ in maximal nucleus clusters.  $\Phi(N_1) = \Phi(N_2)$, since $\Cn N_1,\Cn N_2$ are maximal, {\em i.e.}, $N_1,N_2$ have same number of sides.  This means that all nuclei in $\mathscr{N},\mathscr{N'}$ have the same description.  Consequently, $\mathscr{N}\ \snd\ \mathscr{N'}$ implies $\Int N_1\ \dcap\ \Int N_2\neq \emptyset$ (from Axiom (dsnP2)).  Hence,
$N_1\ \snd\ N_2$ implies $\mathscr{N}\ \dnear\ \mathscr{N'}$ (from Prop.~\ref{thm:sn-implies-dnear}).  Then $\mathscr{N}\ \dnear\ \mathscr{N'}$ implies $\mathscr{N}\ \dcap\ \mathscr{N'}\neq\emptyset$ (from Prop.~\ref{prop:dnear}).  Therefore, $\mathop{\bigcap}\limits_{\Phi}\mbox{Nrv}\mathscr{F}_{_{MNC}} \neq \emptyset$.
\end{proof}

\bibliographystyle{amsplain}
\bibliography{NSrefs}

\end{document}